\renewcommand{\AA}{\mathbb{A}}
\newcommand{\affin}{\mathbb{A}}
\newcommand{\C}{\mathcal{C}}
\newcommand{\D}{\mathcal{D}}
\newcommand{\sset}{\mathrm{s}\mathcal{S}\mathrm{et}}
\newcommand{\smk}{\mathcal{S}\mathrm{m}/k}
\newcommand{\set}{\mathcal{S}\mathrm{et}}
\newcommand{\Set}{\set}				
\newcommand{\sSet}{\sset}
\newcommand{\op}{{\mathrm{op}}}
\newcommand{\pr}{\mathrm{pr}}
\newcommand{\ADelta}{\mbox{{$\Delta$ \hspace{-2.37ex}{\fontsize{7pt}{0pt}$\Delta$}}}}
\newcommand{\Sing}{\mathrm{Sing}}
\newcommand{\Spec}{\mathrm{Spec}\hspace{2pt}}
\newcommand{\adjoint}{~\rightleftarrows~}
\newcommand{\ssim}{\hspace{-3.2pt}\mathrel{\vcenter{\offinterlineskip\vskip+5.5pt\hbox{$\sim$}}}}
\newcommand{\upperminus}{{\hspace{-1pt}\mbox{\fontsize{5}{5}\selectfont$($-$)$}}}
\DeclareMathOperator{\Hom}{\hom}
\DeclareMathOperator{\id}{id}
\DeclareMathOperator*{\colim}{colim}
\DeclareMathOperator{\Pre}{Pre}
\DeclareMathOperator{\sPre}{sPre}
\newcommand{\ssPre}{\mathcal{S}\mathrm{Pre}}
\newcommand{\SPre}{\ssPre}
\theoremstyle{plain} 
\newtheorem{theorem}{Theorem}[section]
\newtheorem{corollary}[theorem]{Corollary}
\newtheorem{lemma}[theorem]{Lemma}
\theoremstyle{definition}
\newtheorem{defi}[theorem]{Definition}
\newtheorem{remark}[theorem]{Remark}
\numberwithin{equation}{section}
\title{Enriched simplicial presheaves and the Motivic Homotopy Category}
\author{Philip Herrmann \and Florian Strunk}
\begin{document}

\begin{abstract}
We construct models for the motivic homotopy category based on simplicial functors from smooth schemes over a field to simplicial sets. These spaces are homotopy invariant and therefore one does not have to invert the affine line in order to get a model for the motivic homotopy category.
\end{abstract}

\maketitle

\section*{Introduction}
In this note, we study certain simplicial functors as an alternative for simplicial presheaves in the construction of the motivic homotopy category. An enriched simplicial presheaf is a simplicial functor from a category of schemes enriched over simplicial sets to the category of simplicial sets enriched over itself. Considering enriched simplicial presheaves instead of simplicial presheaves seems to be quite natural in the spirit of motivic homotopy theory. For example there is a naive homotopy contracting the affine line in the category of schemes. More precisely, for any constant map $c$ there exists a morphism $H$ of smooth schemes over a field, such that the diagram
\begin{equation*}
\begin{xy}
\xymatrix{
\affin^1\ar@{->}[dr]_{i_0}\ar@{->}@/^/[drr]^{c}&\\
&\affin^1\times\affin^1\ar@{->}[r]^(0.57){H}&\affin^1\\
\affin^1\ar@{->}@/_/[urr]_{\id}\ar@{->}[ur]^{i_1}&
}
\end{xy}
\end{equation*}
commutes. The simplicial presheaf represented by $\affin^1$ resists to be weakly equivalent to the point until it is finally forced to be weakly contractible by Bousfield localization. In contrast to this the enriched simplicial presheaf represented by $\affin^1$ is objectwise contractible (cf. Corollary \ref{a1contr}). Hence the motivic models based on these spaces can be obtained without the $\affin^1$-contracting Bousfield localization. 

\subsubsection*{Acknowledgements}
We would like to thank Prof. Oliver R\"ondigs for his support and many helpful discussions.

\subsubsection*{Conventions}
Throughout this text let $k$ be a field and $\smk$ the category of smooth and separated schemes of finite type over $k$. The category of simplicial ($\Set$-valued) presheaves on $\smk$ is denoted by $\sPre$.

\section{The category of enriched simplicial presheaves}
In this section we introduce the category $\ssPre$ of \emph{enriched simplicial presheaves} as an alternative
for the category $\sPre$ of simplicial presheaves. The construction of $\ssPre$ is based on categories enriched over simplicial sets. In a simplicial category $\C$ there are $\Hom$-simplicial sets $\sset_\C(A,B)$ instead of just $\Hom$-sets associated with any two objects, in a way compatible with an associative and unital composition. The $0$-simplices of $\sset_\C(A,B)$ can be thought of as morphisms $A\to B$. The relation of being connected by a zig-zag of $1$-simplices models a notation of \emph{naive homotopy} depending on the enrichment. In the following we consider the category $\sset$ of simplicial sets as a simplicial category by
\[ \sset_{\sset}(A,B)_n = \Hom_{\sset}(A\times\Delta^n,B).\]
The naive homotopy relation turns out to be pretty sensible in the sense that it coincides with a notation of left homotopy in the usual model structure on simplicial sets. This enrichment is natural in many aspects, for example it is given by the Yoneda embedding and the following straightforward lemma.

\begin{lemma}
 Let $\C$ be a category with finite products. Any cosimplicial object $c:\Delta\to\C$ with $c_0$ the 
terminal object of $\C$ gives rise to a simplicial category, which we also denote by $\C$, with underlying category $\C$ and
\[ \sset_\C(A,B)_n= \Hom_\C(A\times c_n,B).\]
\end{lemma}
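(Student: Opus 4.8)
The plan is to check directly that the displayed hom-objects, equipped with a suitable composition and suitable units, form a category enriched over $\sset$, and then to identify its underlying ordinary category with $\C$. First I would put the simplicial structure on each $\sset_\C(A,B)$: to a map $\alpha\colon[m]\to[n]$ in $\Delta$ one assigns $\alpha^{*}\colon\sset_\C(A,B)_n\to\sset_\C(A,B)_m$, $f\mapsto f\circ(\id_A\times c(\alpha))$. That this is functorial in $\alpha$, so that $\sset_\C(A,B)$ really is a simplicial set, is immediate from functoriality of $c\colon\Delta\to\C$ and of the endofunctor $A\times(-)$ of $\C$.

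Next I would write down the composition and units using the canonical comonoid structure carried by every object of a category with finite products: for $X$ in $\C$ let $\delta_X\colon X\to X\times X$ be the diagonal and $e_X\colon X\to c_0$ the unique map to the terminal object; these make $X$ a cocommutative comonoid, every morphism of $\C$ is a morphism of comonoids for this structure, and in particular each $c(\alpha)$ is. I then define the composition $\sset_\C(B,C)\times\sset_\C(A,B)\to\sset_\C(A,C)$ in simplicial degree $n$ by $(g,f)\mapsto g\circ(f\times\id_{c_n})\circ(\id_A\times\delta_{c_n})$, and the unit $j_A\in\sset_\C(A,A)_0=\Hom_\C(A\times c_0,A)$ to be the canonical isomorphism $A\times c_0\weq A$, which exists because $c_0$ is terminal.

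What then remains is bookkeeping along the enriched-category axioms. That composition is a map of simplicial sets reduces to the identity $\delta_{c_n}\circ c(\alpha)=(c(\alpha)\times c(\alpha))\circ\delta_{c_m}$, i.e.\ to $c(\alpha)$ respecting diagonals; associativity follows from associativity of composition in $\C$ together with coassociativity of $\delta_{c_n}$; and the left and right unit laws follow from the counit laws relating $\delta_{c_n}$ to $e_{c_n}\colon c_n\to c_0$, using once more that $c_0$ is terminal, so that $j_A$ degenerates to the projection $\pr_A\colon A\times c_n\to A$ in every degree. For the underlying category one observes that its hom-set from $A$ to $B$ is $\sset_\C(A,B)_0=\Hom_\C(A\times c_0,B)$, which the projection identifies naturally with $\Hom_\C(A,B)$; under this identification the degree-$0$ composition becomes $g\circ f$ and $j_A$ becomes $\id_A$, so the underlying category is canonically $\C$, legitimising the reuse of the name. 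I do not expect a genuine obstacle; the only point demanding care is the coherence of binary products — associative and unital only up to canonical isomorphism — which one circumvents either by choosing products strictly or by decorating the diagrams above with the coherence isomorphisms, while keeping track that $c_0$ being terminal is invoked exactly at the unit axioms and at the identification of the underlying category.
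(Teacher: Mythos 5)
Your proof is correct and takes essentially the same approach as the paper's: the simplicial structure agrees, your composition $g\circ(f\times\id_{c_n})\circ(\id_A\times\delta_{c_n})$ is literally the paper's $g\circ(f,\pr_2)$ written via the diagonal, and the identification of the underlying category is the same. The only difference is that you spell out the unit and coherence verifications that the paper delegates to a citation of Borceux.
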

\begin{proof}
A map $\sigma:[m]\to[n]$ in $\Delta$ induces a map
$\sset_\C(A,B)_n\to\sset_\C(A,B)_m$ by assigning the composite
\[A\times c([m])
\xrightarrow{(\pr_1,c(\sigma)\circ\pr_2)}A\times c([n]) \xrightarrow{f} B
\]
to $f\in\sset_\C(A,B)_n$. Clearly $\sset_\C(A,B)(\id_{[n]})=\id_{\sset_\C(A,B)_n}$ and one observes that for composable morphisms $\sigma$ and $\tau$ in $\Delta$ the identity
\begin{eqnarray*}
\sset_\C(A,B)(\tau\circ\sigma)(f) &=& f\circ\left( \pr_1,
\left(c(\tau\circ\sigma)\circ\pr_2\right)\right)\\
&=& \sset_\C(A,B)(\sigma)\circ \sset_\C(A,B)(\tau)(f)
\end{eqnarray*}
holds and hence $\sset_\C(A,B)$ is in fact a simplicial set. The composition maps
$$
c_{ABC}:\sset_\C(B,C)\times\sset_\C(A,B)\to\sset_\C(A,B), \quad(g,f)\mapsto g\circ(f,\pr_2)
$$
are maps of simplicial sets and satisfy the relevant coherence diagrams \cite[6.9,6.10]{borc942}. The underlying category $U\C$ has by definition the same objects as $\C$ and the $\Hom$-sets are given by
\begin{eqnarray*}
 \Hom_{U\C}(A,B)&:=&\Hom_{\sset}(\Delta[0],\sset_\C(A,B))\\
&\cong& \sset_\C(A,B)_0\\
&\cong& \Hom_\C(A, B).
\end{eqnarray*}
The composition in $U\C$ is the same as composition in simplicial dimenstion $0$ of the enriched category and therefore $U\C\cong \C$.
\end{proof}

By applying this lemma to the \emph{algebraic cosimplicial object} $\ADelta^\upperminus$ given by
$$
\textstyle
	\ADelta^p=\Spec k[X_0,\ldots,X_p]/{\left(1-\sum X_i\right)}
$$
one obtains $\smk$ as a simplicial category.

\begin{defi}
The category $\ssPre$ of \emph{enriched simplicial presheaves} is the category of simplicial functors from $\smk^\op$ to $\sset$, i.e.~functors $X$ assigning a simplicial set $XU$ to any smooth $k$-scheme $U$ and a morphism
\[ \sset_{\smk}(U,V)\to\sset_{\sset}(XV,XU)\]
of simplicial sets to any pair of objects $U,V$ compatible with composition.
\end{defi}

\begin{remark}
The notation of naive homotopy in the simplicial category $\smk$ is not completly convenient, but includes some reasonable aspects as for example
\[\sset_{\smk}(S^1_t,S^1_t)_*/{\hspace{2pt}\ssim_{\mbox{\tiny naive}}}\]
equals the integers. A discussion of this naive homotopy relation in $\smk$ can be found in section 2 of \cite{morel2004motivic}.
\end{remark}

\begin{lemma}[Adjunction Lemma]\label{adjunctionlemma}
Let $\D$ be an essentially small category, $\C$ a cocomplete category and $c:\D\to\C$ a functor. There exists a commutative diagram
$$
\xymatrix{
	\D\ar[r]^(0.35){\mbox{\tiny Yoneda}}\ar[dr]_c	&	\Pre(\D)\ar[d]^{|-|}\\
						&\C
}
$$
and an adjunction $|\hspace{-2pt}-\hspace{-2pt}|:\Pre(\D)\adjoint \C: \Sing$ with $\Sing(X)=\hom(c(-),X)$.
\end{lemma}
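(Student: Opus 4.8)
The plan is to realise $|\hspace{-2pt}-\hspace{-2pt}|$ as the left Kan extension of $c$ along the Yoneda embedding and to deduce the adjunction from the density of representables. First I would define $\Sing\colon\C\to\Pre(\D)$ on objects by $\Sing(X)=\hom_\C(c(-),X)$ and on morphisms by postcomposition; this is a presheaf on $\D$ since $c$ is a functor, and $\Sing$ is visibly functorial. For $F\in\Pre(\D)$ write $\Elts(F)$ for its category of elements and $\pi_F\colon\Elts(F)\to\D$ for the projection, and set
\[ |F|\;:=\;\colim_{\Elts(F)}\big(c\circ\pi_F\big). \]
Because $\D$ is essentially small, so is $\Elts(F)$, hence this colimit exists in the cocomplete category $\C$; a morphism $F\to F'$ induces a functor $\Elts(F)\to\Elts(F')$ over $\D$, and therefore a canonical map $|F|\to|F'|$, which makes $|\hspace{-2pt}-\hspace{-2pt}|$ a functor.

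Next I would check that the triangle commutes. For $d\in\D$ the category $\Elts(y(d))$ is the slice $\D\overcat d$, which has the terminal object $(d,\id_d)$ by the Yoneda lemma; evaluating a colimit at a terminal object of its indexing category gives $|y(d)|\cong c(d)$, naturally in $d$, which is exactly the asserted commutativity with $c$.

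The heart of the proof is the adjunction bijection. The co-Yoneda lemma provides a natural isomorphism $F\cong\colim_{\Elts(F)}(y\circ\pi_F)$ in $\Pre(\D)$. Hence, for any $G\in\Pre(\D)$,
\[ \hom_{\Pre(\D)}(F,G)\;\cong\;\lim_{\Elts(F)^\op}\hom_{\Pre(\D)}\big(y(\pi_F(-)),G\big)\;\cong\;\lim_{\Elts(F)^\op}G(\pi_F(-)), \]
where the first isomorphism holds because $\hom_{\Pre(\D)}(-,G)$ carries colimits to limits, and the second is the Yoneda lemma. On the other hand, the defining colimit for $|F|$ gives
\[ \hom_\C(|F|,X)\;\cong\;\lim_{\Elts(F)^\op}\hom_\C\big(c(\pi_F(-)),X\big)\;=\;\lim_{\Elts(F)^\op}\Sing(X)(\pi_F(-)). \]
Specialising $G=\Sing(X)$ identifies the two right-hand sides, so $\hom_\C(|F|,X)\cong\hom_{\Pre(\D)}(F,\Sing(X))$; tracing through the universal properties shows this bijection is natural in $F$ and $X$.

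The only genuine obstacle is bookkeeping rather than ideas: one must ensure the indexing categories are essentially small so that all colimits in sight exist, and — more delicately — that every displayed isomorphism is natural in the relevant variable, which is best arranged by invoking universal properties throughout rather than fixing arbitrary (co)limiting cocones. Everything else is the standard ``every presheaf is canonically a colimit of representables'' machinery.
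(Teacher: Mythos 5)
Your argument is correct and is precisely the standard left Kan extension construction that the paper's proof invokes by citation (the paper simply refers to Borceux for this fact); the colimit over the category of elements, the co-Yoneda decomposition, and the resulting adjunction bijection are exactly the content of that reference. No gaps worth flagging beyond the naturality bookkeeping you already acknowledge.
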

\begin{proof}
 This is a standard fact about left Kan extensions \cite{borc94}.
\end{proof}

The Adjunction Lemma \ref{adjunctionlemma} applied to the functor \[c:\smk\times\Delta\to\ssPre,\quad (U,[n])\mapsto \sset_{\smk}(-,U)\times\Delta^n\]
provides an adjunction
\begin{equation}\label{rladj}
L:\sPre\rightleftarrows\ssPre:R.
\end{equation}
The composite functor $RL$ is well known and was already studied in \cite{mv99} as a functor called $\Sing$, defined by \[\Sing(X)(U)_m=\Hom_{\Pre}(U\times\ADelta^m,X_m).\]

\begin{lemma}\label{rlsing}The functors $RL$ and $\Sing$ coincide.
\end{lemma}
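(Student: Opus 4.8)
The plan is to evaluate $R$ and the left Kan extension $L$ separately and then recognise the composite $RL$ as the formula for $\Sing$ recalled above.

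First I would identify $R$ as the functor that simply forgets the enrichment. For $X\in\ssPre$, $U\in\smk$ and $[n]\in\Delta$ the Adjunction Lemma gives $R(X)(U,[n])=\Hom_{\ssPre}(\sset_{\smk}(-,U)\times\Delta^n,X)$, and by the enriched Yoneda lemma together with the tensor--cotensor adjunction in $\ssPre$ this is naturally isomorphic to $\Hom_{\sset}(\Delta^n,XU)=(XU)_n$. Hence $R$ sends a simplicial functor $X$ to the simplicial presheaf $U\mapsto XU$ (with the functoriality coming from the $0$-simplices of the structure maps); in particular $(RZ)(U)_m=(ZU)_m$ for every $Z\in\ssPre$ and every $U\in\smk$.

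Next I would compute $L(Y)$ for $Y\in\sPre$. Being the left Kan extension of $c$ along the Yoneda embedding $\smk\times\Delta\to\sPre$, the functor $L$ is the coend $L(Y)=\int^{(V,[m])}Y(V)_m\cdot c(V,[m])$. Evaluating at $W\in\smk$ (evaluation preserves colimits in $\ssPre$), separating the coend by Fubini, using that in the cartesian closed category $\sset$ one has $S\cdot(A\times B)\cong A\times(S\cdot B)$ and that $A\times(-)$ preserves colimits, and finally invoking the density formula $\int^{[m]}S_m\cdot\Delta^m\cong S$, one arrives at $L(Y)(W)\cong\int^{V\in\smk}\sset_{\smk}(W,V)\times Y(V)$. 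Passing to $m$-simplices and applying the density formula for presheaves on $\smk$ relative to the scheme $W\times\ADelta^m$ (recall $\sset_{\smk}(W,V)_m=\Hom_{\smk}(W\times\ADelta^m,V)$) yields $L(Y)(W)_m\cong Y(W\times\ADelta^m)_m=\Hom_{\Pre}(W\times\ADelta^m,Y_m)$. Combining this with the description of $R$ from the previous paragraph gives a natural bijection $RL(Y)(W)_m\cong\Sing(Y)(W)_m$.

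It then remains to check that this bijection respects all the structure. Naturality in $Y$ and the contravariant functoriality in $W$ are inherited from the naturality of the coend manipulations above. For the simplicial structure one unwinds that a simplicial operator $\sigma\colon[m']\to[m]$ acts on $\int^{V}\sset_{\smk}(W,V)\times Y(V)$ diagonally, namely through the simplicial structure of $Y(V)$ and, on $\sset_{\smk}(W,V)$, by precomposition with $\ADelta^{\sigma}$ in the manner of the first lemma of this section; under the density identification these two contributions turn into precisely the action of $Y(\sigma)\colon Y_m\to Y_{m'}$ and the restriction along $W\times\ADelta^{\sigma}$ that together define $\Sing(Y)(W)$ (cf.~\cite{mv99}). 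I expect the only real difficulty here to be organisational rather than conceptual: one has to keep the two roles of $\Delta$ cleanly apart --- the simplicial direction carried by $Y$ on the one hand, and the $\Delta$-factor of the indexing category $\smk\times\Delta$, incarnated by the cosimplicial scheme $\ADelta^{\bullet}$, on the other --- all the way through the chain of (co-)Yoneda and density isomorphisms, no single one of which is deep.
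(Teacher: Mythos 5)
Your proof is correct and is in essence the same argument as the paper's: both reduce to the enriched Yoneda lemma together with the co-Yoneda/density formula identifying $\int^{V}\Hom_{\smk}(W\times\ADelta^m,V)\times Y_m(V)$ with $Y_m(W\times\ADelta^m)=\Hom_{\Pre}(W\times\ADelta^m,Y_m)$. The paper packages this as ``$R$, $L$ and $\Sing$ preserve colimits, so check on representables $U\times\Delta^n$,'' whereas you unwind the left Kan extension as a coend and identify $R$ explicitly as the evaluation functor $X\mapsto(U\mapsto XU)$ --- two presentations of the same computation, with yours having the minor merit of making explicit why $R$ commutes with colimits.
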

\begin{proof}
Since the functors $R,L$ and $\Sing$ preserve colimits we only need to check their behavior on representable objects.
\begin{eqnarray*}
  RL(U\times\Delta^n)(V,[m]) &=& \Hom_{\ssPre}(\sset_{\smk}(-,V)\times\Delta^m,\sset_{\smk}(-,U)\times\Delta^n)\\
&\cong&\sset_{\ssPre}(\sset_{\smk}(-,V),\sset_{\smk}(-,U)\times\Delta^n)_m\\
&\cong&\Hom_{\smk}(V\times\ADelta^m,U)\times\Delta^n_m\\
&\cong&U(V\times\ADelta^m)_m\times\Delta^n_m\\
&\cong&\Hom_{\Pre}(V\times \ADelta^m,U_m)\times\Delta^n_m\\
&\cong& \Sing(U\times\Delta^n)(V)_m
\end{eqnarray*}
\end{proof}

\begin{corollary}\label{a1contr}
The enriched simplicial presheaf represented by the affine line is objectwise contractible. 
\end{corollary}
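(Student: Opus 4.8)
The plan is to identify the enriched simplicial presheaf represented by $\affin^1$ with $\Sing$ applied to the ordinary simplicial presheaf represented by $\affin^1$, and then exhibit an explicit objectwise contraction using the algebraic naive homotopy coming from the cosimplicial object $\ADelta^\upperminus$. First I would observe that the enriched simplicial presheaf represented by the affine line is, by definition of the simplicial enrichment on $\smk$ via $\ADelta^\upperminus$, the functor $V\mapsto \sset_{\smk}(V,\affin^1)$ with $\sset_{\smk}(V,\affin^1)_m=\Hom_{\smk}(V\times\ADelta^m,\affin^1)$. On the other hand, this is exactly $RL$ applied to the representable simplicial presheaf $\affin^1=\affin^1\times\Delta^0$: unwinding the computation in the proof of Lemma~\ref{rlsing} with $n=0$ gives $RL(\affin^1)(V)_m\cong\Hom_{\smk}(V\times\ADelta^m,\affin^1)$, so by Lemma~\ref{rlsing} the represented enriched simplicial presheaf is $\Sing(\affin^1)$.

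Next I would contract $\Sing(\affin^1)(V)$ for each fixed $V$. The standard fact (see \cite{mv99}) is that for any simplicial presheaf $X$ there is a simplicial homotopy on $\Sing(X)$ induced by any ``contraction-type'' structure on $\ADelta^\upperminus$; concretely, the multiplication map $\mu\colon\affin^1\times\affin^1\to\affin^1$ together with the natural maps $\ADelta^m\times\affin^1\to\ADelta^m$ supplies a simplicial homotopy $\Sing(\affin^1)(V)\times\Delta^1\to\Sing(\affin^1)(V)$ between the identity and the constant map at the origin. Said differently, the naive algebraic homotopy $H\colon\affin^1\times\affin^1\to\affin^1$ from the introduction, precomposed with $V\times\ADelta^m$ in the $V$-variable, gives on each $V$ a simplicial contraction of $\Sing(\affin^1)(V)$ onto the point corresponding to the constant map; hence $\Sing(\affin^1)(V)$ is simplicially contractible, so contractible as a simplicial set.

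I expect the main obstacle to be bookkeeping rather than mathematical depth: one must check that the evident degreewise formulas $\Hom_{\smk}(V\times\ADelta^m,\affin^1)\times\Delta^1_m\to\Hom_{\smk}(V\times\ADelta^m,\affin^1)$ assemble into a genuine map of simplicial sets (naturality in $[m]\in\Delta$) and that its two ends are the identity and the constant map at the base point. This amounts to verifying the compatibility of the $\ADelta^\upperminus$-action with the simplicial structure maps, i.e.\ that the faces and degeneracies of $\ADelta^\upperminus$ interact correctly with the homotopy $H$; all of this is straightforward from the explicit description of $\ADelta^p$ and $H$, and is essentially the content of the $\Sing$-homotopy-invariance argument of \cite{mv99} specialized to the representable presheaf $\affin^1$. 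Once this is in place, objectwise contractibility is immediate.
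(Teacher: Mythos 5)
Your proposal is correct and follows essentially the same route as the paper: identify the represented enriched simplicial presheaf with $\Sing(\affin^1)$ via Lemma~\ref{rlsing}, then invoke the objectwise contractibility of $\Sing(\affin^1)$. The only difference is that the paper simply cites \cite[Corollary 3.5]{mv99} for the second step, whereas you sketch the explicit simplicial contraction induced by the multiplication map; that is exactly the content of the cited result.
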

\begin{proof}
As a corollary of Lemma \ref{rlsing} we obtain
\begin{eqnarray*}
\affin^1(U)&=&\sset_{\smk}(U,\affin^1)=L\affin^1(U)\\
&=&RL\affin^1(U)=\Sing(\affin^1)(U)
\end{eqnarray*}
 which is contractible by \cite[Corollary 3.5]{mv99}.
\end{proof}

\begin{lemma}\label{bicomplete}
 The category of enriched simplicial presheaves is bicomplete and colimits and limits can be computed objectwise.
\end{lemma}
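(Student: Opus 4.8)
The plan is to construct all small limits and colimits \emph{objectwise} and then check that the resulting diagram of simplicial sets carries the structure of an enriched simplicial presheaf. Fix a small category $I$ and a diagram $F\colon I\to\ssPre$, writing $F_i$ for its value at $i\in I$. Since $\sset$ is bicomplete, for every $U\in\smk$ we may form the simplicial sets $(\lim_I F)(U):=\lim_I F_i(U)$ and $(\colim_I F)(U):=\colim_I F_i(U)$. I claim these assignments extend uniquely to objects of $\ssPre$ for which the canonical maps $(\lim_I F)(U)\to F_i(U)$ and $F_i(U)\to(\colim_I F)(U)$ are the components of enriched natural transformations, and that the corresponding (co)cones are then universal in $\ssPre$.

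The one point requiring care is the construction of the structure maps, and it rests on two properties of the self-enrichment of $\sset$: the functor $\sset_{\sset}(A,-)$ preserves limits (being right adjoint to $A\times-$), while $\sset_{\sset}(-,B)$ sends colimits in $\sset$ to limits, since $\sset_{\sset}(\colim_I A_i,B)_n=\Hom_{\sset}\bigl(\colim_I(A_i\times\Delta^n),B\bigr)=\lim_I\sset_{\sset}(A_i,B)_n$ using that $-\times\Delta^n$ is a left adjoint and $\Hom_{\sset}(-,B)$ turns colimits into limits. For the limit case, composing the structure map of $F_i$ with the map induced by the projection $(\lim_I F)(V)\to F_i(V)$ gives $\sset_{\smk}(U,V)\to\sset_{\sset}\bigl((\lim_I F)(V),F_i(U)\bigr)$; enriched naturality of $F_\alpha$ for $\alpha\colon i\to i'$ makes this family compatible over $I$, so the universal property identifies the target of an induced map as $\lim_I\sset_{\sset}\bigl((\lim_I F)(V),F_i(U)\bigr)=\sset_{\sset}\bigl((\lim_I F)(V),(\lim_I F)(U)\bigr)$, and we obtain the desired structure map. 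Its unitality and compatibility with composition follow from the same identities for each $F_i$ together with uniqueness in the universal property. The colimit case is strictly dual, now using that $\sset_{\sset}(-,B)$ converts the colimit $(\colim_I F)(V)=\colim_I F_i(V)$ into the limit computing $\sset_{\sset}\bigl((\colim_I F)(V),(\colim_I F)(U)\bigr)$.

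Finally one checks universality. By construction the projections $\lim_I F\to F_i$ and the coprojections $F_i\to\colim_I F$ are enriched natural transformations. Given any cone in $\ssPre$ over $F$ with vertex $X$, the underlying objectwise cone yields for each $U$ a unique map $X(U)\to(\lim_I F)(U)$ in $\sset$, and one verifies --- once more by appealing to the universal property in $\sset$ at each object --- that this family is an enriched natural transformation, and the unique one compatible with the cone; dually for colimits. Hence $\lim_I F$ and $\colim_I F$ are the limit and colimit of $F$ in $\ssPre$, computed objectwise. (This is also an instance of the general principle that the category of enriched functors into a bicomplete closed symmetric monoidal category is bicomplete with pointwise (co)limits, cf.~\cite{borc942}.) The main obstacle is thus merely bookkeeping: it is the two continuity properties of $\sset_{\sset}(-,-)$ --- continuous in the second variable, and sending colimits to limits in the first --- that make the naive objectwise formulas actually define enriched simplicial presheaves.
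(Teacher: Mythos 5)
Your proof is correct, and it takes a more explicit route than the paper does. The paper disposes of the lemma in one line by citing Borceux's general theory: $\ssPre$ is the underlying ordinary category of an enriched functor category into the bicomplete closed category $\sset$, all weighted $\sset$-(co)limits exist there \cite[Proposition 6.6.17]{borc942}, and the underlying category of such an enriched category is bicomplete \cite[Proposition 6.6.16]{borc942}. You instead carry out the conical case by hand: form $(\lim_I F)(U)=\lim_I F_i(U)$ and $(\colim_I F)(U)=\colim_I F_i(U)$ objectwise, and observe that the two continuity properties of the self-enrichment --- $\sset_{\sset}(A,-)$ preserves limits, and $\sset_{\sset}(-,B)$ sends colimits to limits (your computation via $-\times\Delta^n$ being a left adjoint is the right one) --- are exactly what is needed to assemble the structure maps $\sset_{\smk}(U,V)\to\sset_{\sset}\bigl((\lim_I F)(V),(\lim_I F)(U)\bigr)$ and dually for colimits, after which universality is a routine objectwise check. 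This is, as you note yourself, just an unpacking of the same general principle, but it has the advantage of being self-contained and of making the ``computed objectwise'' clause of the statement visibly true rather than implicit in the citation; the paper's version is shorter and covers weighted (co)limits as well, which matter later for the tensoring and cotensoring over $\sset$.
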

\begin{proof}
 The category $\ssPre$ is the underlying category of a $\sset$-category in which all weighted $\sset$-colimits and limits exist \cite[Proposition 6.6.17]{borc942}, so $\ssPre$ is bicomplete by \cite[Proposition 6.6.16]{borc942}.
\end{proof}

We use the conventional terminology and say that a set $I$ of morphisms in a category \emph{permits the small object argument}, if the domains of the elements of $I$ are small relative to transfinite compositions of pushouts of elements in $I$.

\begin{lemma}\label{allsmall} Let $I$ be a set of morphisms in $\sPre$. Then the set $LI$ of morphisms in $\ssPre$ permits the small object argument.
\end{lemma}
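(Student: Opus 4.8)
The goal is to show that for a set $I$ of morphisms in $\sPre$, the set $LI$ permits the small object argument in $\ssPre$, i.e.\ the domains of $Lf$ for $f\in I$ are small relative to transfinite compositions of pushouts of maps in $LI$. My plan is to reduce this to smallness in $\sPre$ via the adjunction $(L,R)$ from \eqref{rladj}, exploiting that $R$ is well-behaved on the relevant transfinite diagrams.

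First I would recall that by Lemma \ref{bicomplete} colimits in $\ssPre$ are computed objectwise, and that $L$, being a left adjoint, preserves all colimits; in particular $L$ preserves pushouts and transfinite compositions. So a transfinite composition $X_0 \to \colim_{\beta<\lambda} X_\beta$ built from pushouts of maps in $LI$ is, at least when the $X_\beta$ lie in the image of $L$, obtained by applying $L$ to the corresponding diagram in $\sPre$; but since not every object of $\ssPre$ is of the form $LY$, the cleaner route is to use the right adjoint. The key observation is the adjunction isomorphism
\[
\Hom_{\ssPre}(Lf\text{-domain}, Z) \;\cong\; \Hom_{\sPre}(f\text{-domain}, RZ),
\]
so smallness of $\dom(Lf)$ relative to a $\lambda$-sequence $Z_\bullet$ in $\ssPre$ follows once we know $\dom(f)$ is small relative to $R Z_\bullet$ in $\sPre$ and that $R$ carries the relevant colimit of $Z_\bullet$ to the colimit of $RZ_\bullet$. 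The second point is what requires care: $R$ is a right adjoint and need not preserve colimits in general. However, $RZ$ is given by the formula $RZ(U)_m = Z(U)_m$-type data — more precisely, unwinding Lemma \ref{adjunctionlemma}, $R = \Sing_c$ is evaluation-type and one checks directly from the objectwise description that $R$ preserves filtered colimits, pushouts along the image of $LI$, and transfinite compositions of such; this is because the generating maps $LI$ and their pushouts, computed objectwise, only ever change $Z$ in the "simplicial direction" in a way that commutes with the formula for $R$.

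Concretely, the steps are: (1) Recall every object of $\sPre$ is small (e.g.\ by \cite{mv99} or standard simplicial presheaf theory, as $\sPre$ is a category of presheaves of sets), so in particular each $\dom(f)$ for $f\in I$ is $\kappa$-small for some regular cardinal $\kappa$ relative to all of $\sPre$. (2) Show that $R$ preserves the transfinite compositions of pushouts of maps in $LI$ that appear in the small object argument; this reduces to the objectwise statement since colimits in $\ssPre$ are objectwise (Lemma \ref{bicomplete}), and the maps in $LI$ have a simple enough objectwise form — $Lf(U)$ is, after Lemma \ref{rlsing}-style computations, built from $f$ applied to products with the $\ADelta^m$ — that the $\Sing$-type formula for $R$ commutes with the colimit. (3) Combine with the $(L,R)$-adjunction to transfer $\kappa$-smallness of $\dom(f)$ in $\sPre$ to $\kappa$-smallness of $\dom(Lf) = L\dom(f)$ in $\ssPre$, against exactly these transfinite sequences. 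Taking the regular cardinal large enough to work for all finitely many (or all) $f\in I$ simultaneously finishes the argument.

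The main obstacle is step (2): proving that $R$ preserves the transfinite compositions of pushouts of $LI$-maps. A priori this is false for arbitrary diagrams, so one genuinely has to use the specific shape of the maps in $LI$ together with the objectwise computation of colimits. I expect the cleanest way is to note that $RL = \Sing$ (Lemma \ref{rlsing}), that $\Sing$ preserves filtered colimits and pushouts (both being computable objectwise and $\Sing$ being levelwise a filtered-colimit-preserving construction in the simplicial variable), and then to bootstrap: pushouts of $LI$-maps in $\ssPre$ have $R$-images that are pushouts of $I$-maps in $\sPre$, and transfinite compositions thereof are preserved by the same objectwise reasoning. Once that lemma is in hand, the adjunction argument of step (3) is formal.
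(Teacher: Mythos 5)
Your overall strategy is the same as the paper's: transfer smallness of $\dom(f)$ in $\sPre$ to smallness of $L\dom(f)$ in $\ssPre$ via the adjunction isomorphism $\Hom_{\ssPre}(LX,Z)\cong\Hom_{\sPre}(X,RZ)$, using that every object of the locally presentable category $\sPre$ is small. The one place where you diverge is your step (2), which you flag as the main obstacle and for which you offer only a heuristic ("the maps in $LI$ only change $Z$ in the simplicial direction"); this is not yet an argument, and it also makes the step look harder than it is. In fact no analysis of the shape of the maps in $LI$ is needed: by the enriched Yoneda lemma, $R(Z)(U)_m=\Hom_{\ssPre}(\sset_{\smk}(-,U)\times\Delta^m,Z)\cong Z(U)_m$, so $R$ is naturally isomorphic to objectwise evaluation, and since colimits in $\ssPre$ are computed objectwise (Lemma \ref{bicomplete}), $R$ commutes with \emph{all} colimits. (The paper takes exactly this for granted; it appears as a hypothesis of the Lifting Lemma \ref{liftinglemma} and is used without comment in Theorem \ref{liftingtheorem}.) With that in hand, your steps (1) and (3) go through verbatim against arbitrary $\lambda$-sequences, which is also what the paper proves: $LX$ is small relative to every $\lambda$-sequence, not merely those built from pushouts of $LI$, so the restriction you build into your statement of step (2) is unnecessary. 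If you replace your heuristic for step (2) by the Yoneda/objectwise-colimit observation, your proof becomes the paper's proof.
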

\begin{proof}
We make use of the fact that all objects in the locally presentable category $\sPre$ are small. So there exists a cardinal $\kappa$, such that for all $\kappa$-filtered ordinals $\lambda$ and any $\lambda$-sequence $S:\lambda\to\ssPre$ the following diagram commutes. 
\begin{equation*}
\begin{xy}
\xymatrix{ 
\colim\limits_{\beta<\lambda}\Hom_{\ssPre}(LX,F_\beta)\ar@{->}[d]_{\cong}\ar@<3.8pt>@{->}[r]^{\Phi}&\Hom_{\ssPre}(LX,\colim\limits_{\beta<\lambda}F_\beta)\ar@{->}[d]^{\cong}\\
\colim\limits_{\beta<\lambda}\Hom_{\sPre}(X,RF_\beta)\ar@<3.8pt>@{->}[r]^{\cong}&\Hom_{\sPre}(X,\colim\limits_{\beta<\lambda}RF_\beta)
}
\end{xy}
\end{equation*}
Hence $LX$ is small and $LI$ permits the small object argument.
\end{proof}

\section{Model structures for enriched simplicial presheaves}

In this section we construct model structures on the category $\SPre$ of enriched simplicial presheaves. These model structures correspond to model structures on the category $\sPre$ of simplicial presheaves. Subsequently, Corollary \ref{fibrant} gives a characterization of the fibrant objects.

\begin{defi}
Let $\C$ and $\D$ be a model categories and $L:\C\adjoint \D:R$ an adjunction. The model structure on $\D$ is called \emph{$R$-lifted} if a morphism $f$ of $\D$ is a weak equivalence (resp.~a fibration) if and only if $R(f)$ is a weak equivalence (resp.~a fibration) of $\C$. A cofibrantly generated model category $\C$ is called \emph{$(I,J)$-cofibrantly generated} if $I$ is a set of generating cofibrations and $J$ is a set of generating acyclic cofibrations for the model structure on $\C$. 
\end{defi}

\begin{remark}
 If $\C$ is a model category, $L:\C\adjoint \D:R$ an adjunction and $\D$ is equipped with the $R$-lifted model structure, then the adjunction $(L,R)$ is necessarily a Quillen adjunction since the right adjoint $R$ preserves fibrations and acyclic fibrations. The lifted model structure on $\D$ is right proper if and only if $\C$ is a right proper model category.
\end{remark}

\begin{lemma}[Lifting Lemma]\label{liftinglemma}
Let $\C$ be a $(I,J)$-cofibrantly generated model category, $\D$ a bicomplete category and $L:\C\adjoint \D:R$ an adjunction such that the right adjoint $R$ commutes with colimits and $LI$ and $LJ$ permit the small object argument. Then there exists a unique $(LI,LJ)$-cofibrantly generated $R$-lifted model structure on $\D$ if and only if for every $j\in J$ and every pushout diagram
$$
\xymatrix{
L(A)\ar[r]\ar[d]_{L(j)}	&	X\ar[d]^p	\\
L(B)\ar[r]		&	Y
}
$$
the morphism $R(p)$ is a weak equivalence of $\C$.
\end{lemma}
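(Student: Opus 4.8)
The plan is to use the standard recognition theorem for cofibrantly generated model structures (e.g. \cite[Theorem 11.3.1]{hirschhorn} or Kan's transfer criterion), whose hypotheses we are essentially being handed. The ``only if'' direction is immediate: if the $(LI,LJ)$-cofibrantly generated $R$-lifted model structure exists, then for $j\in J$ the map $L(j)$ is a generating acyclic cofibration, hence an acyclic cofibration; acyclic cofibrations are closed under pushout, so $p$ in the displayed square is an acyclic cofibration, in particular a weak equivalence of $\D$; and by definition of the $R$-lifted structure $R(p)$ is then a weak equivalence of $\C$. So the content is in the ``if'' direction.

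For the ``if'' direction I would first check that the three classes---$R$-weak equivalences, $R$-fibrations, and $(LI)$-cofibrations (i.e. maps with the left lifting property against $LI$-injectives)---satisfy the model category axioms. Since $R$ commutes with colimits and preserves limits (it is a right adjoint), and since weak equivalences and fibrations in $\C$ are closed under the relevant constructions, the 2-out-of-3 property and the retract axiom for the $R$-lifted classes follow formally from the corresponding properties in $\C$. The lifting axioms and factorization axioms are produced by the small object argument applied to $LI$ and $LJ$, which is legitimate precisely because we assumed $LI$ and $LJ$ permit the small object argument: this gives functorial factorizations $(LI\text{-cof}, LI\text{-inj})$ and $(LJ\text{-cof}, LJ\text{-inj})$. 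By adjunction, $R$ sends $LI$-injectives to $I$-injectives $=$ (acyclic fibrations of $\C$) and $LJ$-injectives to $J$-injectives $=$ (fibrations of $\C$); so $LI$-injectives are exactly the $R$-acyclic-fibrations and $LJ$-injectives are exactly the $R$-fibrations. It remains to match up the two factorizations with weak equivalences in the expected way, and this is where the hypothesis on pushouts of the $L(j)$ enters.

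Concretely, the one nontrivial point is the inclusion $LJ\text{-cof} \subseteq (LI\text{-cof}) \cap (R\text{-weak equivalences})$: one must show every relative $LJ$-cell complex is an $R$-weak equivalence (the inclusion into $LI\text{-cof}$ is formal, since $LJ\subseteq LI\text{-cof}$ because $J\subseteq I\text{-cof}$ in $\C$ and $L$ preserves cofibrations by adjunction). A transfinite composition of pushouts of maps $L(j)$, $j\in J$, has each pushout $p$ with $R(p)$ a weak equivalence of $\C$ \emph{by hypothesis}; since $R$ commutes with the transfinite composition and weak equivalences in $\C$ are closed under transfinite composition of maps that are themselves weak equivalences, the whole composite maps under $R$ to a weak equivalence; retracts then give the claim for all of $LJ\text{-cof}$. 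Granting this, the $(LJ\text{-cof}, LJ\text{-inj})$-factorization is a (trivial cofibration, fibration)-factorization in the $R$-lifted sense, the $(LI\text{-cof}, LI\text{-inj})$-factorization is a (cofibration, trivial fibration)-factorization, the remaining lifting axiom follows by the standard retract argument (a map that is both an $LI$-cofibration and an $R$-weak equivalence is shown to be an $LJ$-cofibration, hence lifts against $R$-fibrations), and uniqueness of the structure is automatic since the generating sets and the $R$-lifted classes pin everything down.

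The main obstacle is exactly the step just described: identifying the $(LI\text{-cof})\cap(R\text{-weq})$ maps with $LJ\text{-cof}$, equivalently showing that relative $LJ$-cell complexes are $R$-weak equivalences. Everything else is bookkeeping with adjunctions and the small object argument, but this step is the one that genuinely requires the pushout hypothesis (it fails in general without it), and it also implicitly requires that $R$ commute with the transfinite colimits appearing in cell complexes---which is why the hypothesis ``$R$ commutes with colimits'' was built in. I would be slightly careful to state that last commutation is used not just for this step but also to know that $R$ of an $LI$- or $LJ$-cell complex is computed correctly.
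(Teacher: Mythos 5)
Your proposal is correct and is essentially the paper's proof: the paper disposes of this lemma by citing the standard transfer theorem (Hirschhorn, Theorem 11.3.2), and your argument is a faithful expansion of that theorem's proof, with the ``only if'' direction formal and the ``if'' direction reducing, exactly as you say, to showing that relative $LJ$-cell complexes are $R$-weak equivalences via the pushout hypothesis together with the assumption that $R$ preserves colimits. The one point to watch is your appeal to closure of weak equivalences in $\C$ under transfinite composition, which is not automatic in an arbitrary model category --- but this gap is equally present in the paper's reduction of Hirschhorn's relative-cell-complex condition to a single-pushout condition, and is harmless in the intended applications, where the pushed-out maps are in fact acyclic cofibrations.
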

\begin{proof}
This is a standard lifting argument \cite[Theorem 11.3.2]{hirsch}.
\end{proof}

\begin{theorem}\label{liftingtheorem}
Consider the adjunction
$$
	L:\sPre\adjoint \SPre:R
$$
constructed in \eqref{rladj}. Let $\sPre$ be equipped with a cofibrantly generated model structure with $\AA^1$-local weak equivalences as weak equivalences and with the property that every cofibration is in particular a monomorphism. Then the $R$-lifted model structure on $\SPre$ exists and the adjunction $(L,R)$ is a Quillen equivalence.
\end{theorem}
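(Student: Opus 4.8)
The plan is to obtain the model structure on $\SPre$ from the Lifting Lemma \ref{liftinglemma} and then to upgrade the resulting Quillen adjunction to a Quillen equivalence via the standard recognition criterion. All hypotheses of Lemma \ref{liftinglemma} are already available except the pushout condition: $\SPre$ is bicomplete by Lemma \ref{bicomplete}, the right adjoint $R$ commutes with colimits (as recorded before Lemma \ref{rlsing}), and for a set $I$ of generating cofibrations and a set $J$ of generating acyclic cofibrations the sets $LI$ and $LJ$ permit the small object argument by Lemma \ref{allsmall}. So the work is to show that for every $j\in J$ and every pushout of $L(j)$ along a map $L(A)\to X$, with induced map $p\colon X\to Y$, the morphism $R(p)$ is an $\AA^1$-local weak equivalence.

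Since $R$ commutes with colimits it carries such a pushout square to a pushout square in $\sPre$, in which $R(p)$ is the pushout of $RL(j)=\Sing(j)$ (Lemma \ref{rlsing}) along $\Sing(A)=RL(A)\to RX$. I would then argue that $\Sing(j)$ is an acyclic cofibration for the left Bousfield localization of the injective model structure on $\sPre$ at $\AA^1$; this ``injective $\AA^1$-local'' model structure has the same weak equivalences as the given one, namely the $\AA^1$-local weak equivalences, and its cofibrations are exactly the monomorphisms. Indeed $j\in J$ is a cofibration, hence a monomorphism by hypothesis, and $\Sing$ preserves monomorphisms, as is immediate from the formula $\Sing(X)(U)_m=\Hom_{\Pre}(U\times\ADelta^m,X_m)$ of Lemma \ref{rlsing}; thus $\Sing(j)$ is a monomorphism. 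Furthermore $j$ is an $\AA^1$-local weak equivalence, and the canonical map $Z\to\Sing(Z)$ is an $\AA^1$-local weak equivalence for every $Z$ by \cite{mv99}, so two-out-of-three applied to the naturality square of $\id\Rightarrow\Sing$ at $j$ shows that $\Sing(j)$ is an $\AA^1$-local weak equivalence as well. Hence $\Sing(j)$ is an acyclic cofibration in the injective $\AA^1$-local model structure; since acyclic cofibrations are stable under pushout, $R(p)$ is an acyclic cofibration there, in particular an $\AA^1$-local weak equivalence, which is what Lemma \ref{liftinglemma} requires.

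For the Quillen equivalence: the adjunction $(L,R)$ is automatically a Quillen adjunction because the structure on $\SPre$ is $R$-lifted, and directly from the definition of the lifted structure $R$ both preserves and reflects all weak equivalences. By the standard recognition criterion (cf.\ \cite{hirsch}) it therefore suffices to check that for every cofibrant $A\in\sPre$ the derived unit $A\to R\bigl((LA)^{\mathrm{fib}}\bigr)$ is a weak equivalence, where $(LA)^{\mathrm{fib}}$ denotes a fibrant replacement of $LA$ in $\SPre$. This map is the composite of the unit $\eta_A\colon A\to RL(A)$ with $R$ applied to the acyclic cofibration $LA\to(LA)^{\mathrm{fib}}$; the second factor is a weak equivalence since $R$ preserves weak equivalences, and under the identification $RL\cong\Sing$ of Lemma \ref{rlsing} the first factor is the canonical map $A\to\Sing(A)$, which is an $\AA^1$-local weak equivalence by \cite{mv99}. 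So the derived unit is a weak equivalence and $(L,R)$ is a Quillen equivalence.

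The main obstacle is the pushout condition, the key subtlety being that $\Sing(j)$ need not be a cofibration in the \emph{given} model structure on $\sPre$ (in a projective-type structure $\Sing$ does not preserve cofibrations), so the pushout argument has to be performed in the injective $\AA^1$-local structure, where the two properties of $\Sing(j)$ that one can actually establish — being a monomorphism and being an $\AA^1$-local weak equivalence — do combine to ``acyclic cofibration''. Apart from this, everything reduces via Lemma \ref{rlsing} and the elementary behaviour of $R$ to the single input from \cite{mv99} that $Z\to\Sing(Z)$ is always an $\AA^1$-local weak equivalence.
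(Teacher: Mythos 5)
Your proposal is correct and follows essentially the same route as the paper: verify the pushout condition of the Lifting Lemma by showing $\Sing(j)$ is an acyclic cofibration in the $\AA^1$-local \emph{injective} structure (monomorphism plus $\AA^1$-local weak equivalence, using \cite{mv99}), then deduce the Quillen equivalence from the unit $X\to\Sing(X)$ being an $\AA^1$-local weak equivalence together with $R$ reflecting weak equivalences. The only cosmetic differences are that you derive the $\Sing$-invariance of $\AA^1$-local weak equivalences by two-out-of-three from the unit rather than quoting it directly, and you phrase the equivalence via the derived-unit criterion instead of the paper's triangle argument.
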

\begin{proof}
Let $I$ be a set of generating cofibrations and $J$ be a set of generating acyclic cofibrations for the model structure on $\sPre$, $j$ an element of $J$ and 
$$
\xymatrix{
L(A)\ar[r]\ar[d]_{L(j)}	&	X\ar[d]^p	\\
L(B)\ar[r]		&	Y
}
$$
be a pushout diagram in $\SPre$. Since $R$ commutes with colimits, the diagram
$$
\xymatrix{
RL(A)\ar[r]\ar[d]_{RL(j)}	&	R(X)\ar[d]^{R(p)}	\\
RL(B)\ar[r]			&	R(Y)
}
$$
is also a pushout. The morphism $j$ is an acyclic cofibration of $\sPre$ and therefore in particular an acyclic cofibration in the $\AA^1$-local injective model structure on $\sPre$, that is a $\AA^1$-local weak equivalence and a monomorphism. Lemma \ref{rlsing} identifies the functor $RL$ with the singular functor $\Sing$. The singular functor respects monomorphisms and $\AA^1$-local weak equivalences by \cite[Corollary 3.8]{mv99}. Therefore $RL(j)$ is an acyclic cofibration in the $\AA^1$-injective model structure on $\sPre$. The class of acyclic cofibrations of a model category is closed under pushouts and hence $R(p)$ is a $\AA^1$-local weak equivalence. The category $\SPre$ is bicomplete by Lemma \ref{bicomplete} and Lemma \ref{allsmall} provides that $LI$ and $LJ$ permit the small object argument. Hence the category $\SPre$ can be equipped with the $R$-lifted model structure by Lemma \ref{liftinglemma}. To prove that $(L,R)$ is a Quillen equivalence, let $\eta$ be the unit of the adjunction $(L,R)$ and let $X$ be a simplicial presheaf. Lemma \ref{rlsing} identifies $\eta(X)$ with the canonical morphism $X\to \Sing(X)$ which is a $\AA^1$-local weak equivalence by \cite[Corollary 3.8]{mv99}. The diagram
$$
\xymatrix{
X\ar[r]^(0.38){\eta(X)}\ar[rd]_(0.48){f^\sharp}	&	RL(X)\ar[d]^{R(f)}	\\
					&	R(Y)
}
$$
shows that a morphism $f:LX\to Y$ is a weak equivalence if and only if its adjoint $f^\sharp$ is a weak equivalence. Therefore  $(L,R)$ is a Quillen equivalence.
\end{proof}

\begin{remark}
The assumptions on the model structure on $\sPre$ of Theorem \ref{liftingtheorem} are fulfilled by all intermediate model structures, e.g. the projective, flasque and injective model structures.
\end{remark}

\begin{lemma}\label{verymonoidal}
Consider the adjunction $L:\sPre\adjoint \SPre:R$ and let $(\sPre,\times)$ be equipped with a monoidal model structure. If the category $(\SPre,\times)$ is endowed with the $R$-lifted model structure, then it is a monoidal model category.
\end{lemma}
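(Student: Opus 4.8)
The plan is to transfer the two axioms of a monoidal model category — the pushout--product axiom and the unit axiom — from $(\sPre,\times)$ to $(\SPre,\times)$ along the adjunction $L:\sPre\adjoint\SPre:R$ of \eqref{rladj}, using that $L$ is a strong symmetric monoidal left Quillen functor.

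The first step, which carries the real content, is to show that $L$ is strong symmetric monoidal for the cartesian products. The right adjoint $R$ preserves all limits (which exist by Lemma \ref{bicomplete}), in particular finite products and the terminal object, so $R$ is strong symmetric monoidal and hence $L$ is canonically oplax symmetric monoidal; this gives comparison morphisms $L(X\times Y)\to LX\times LY$ and $L(*_{\sPre})\to *_{\SPre}$. To see these are isomorphisms, note that in $\sPre$ and in $\SPre$ both finite products and colimits are computed objectwise (Lemma \ref{bicomplete}), while in $\sset$ the cartesian product preserves colimits in each variable; hence $X\times(-)$ is cocontinuous in both categories, and since $L$ is cocontinuous (being a left adjoint) the functors $(X,Y)\mapsto L(X\times Y)$ and $(X,Y)\mapsto LX\times LY$ preserve colimits separately in each variable. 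It therefore suffices to compare them on representables, where $L$ agrees with $c$ by construction of the adjunction (Lemma \ref{adjunctionlemma}), so that $L$ sends the representable at $(U,[n])$ to $c(U,[n])=\sset_{\smk}(-,U)\times\Delta^n$; one further use of cocontinuity then reduces the assertion to the natural isomorphism $\sset_{\smk}(-,U\times V)\cong\sset_{\smk}(-,U)\times\sset_{\smk}(-,V)$ — which holds because $U\times V$ is a product in $\smk$, as one checks after evaluating on $W\times\ADelta^m$ — together with the identification $*_{\sPre}=\sset_{\smk}(-,\Spec k)\times\Delta^0$, which $L$ carries to $*_{\SPre}$. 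Thus $L$ is strong symmetric monoidal.

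Granting this, the pushout--product axiom transfers essentially formally. Being strong monoidal and cocontinuous, $L$ carries the pushout square defining the pushout--product $f\square g$ of morphisms of $\sPre$ onto the one defining $L(f)\square L(g)$, so $L(f\square g)\cong L(f)\square L(g)$. By the standard reduction — the pushout--product of two cofibrations (resp.\ of a cofibration and an acyclic cofibration) is again a cofibration (resp.\ acyclic cofibration) provided this holds on the generating sets — it is enough, for the $(LI,LJ)$-cofibrantly generated structure of Theorem \ref{liftingtheorem}, to treat $L(i)\square L(i')$ for $i,i'\in I$ and $L(i)\square L(j)$ for $i\in I$, $j\in J$. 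But $L(i)\square L(i')=L(i\square i')$ with $i\square i'$ a cofibration (since $(\sPre,\times)$ is a monoidal model category), and $L$ is left Quillen by Theorem \ref{liftingtheorem}, so $L(i\square i')$ is a cofibration; likewise $L(i\square j)=L(i)\square L(j)$ is an acyclic cofibration. Hence $(\SPre,\times)$ satisfies the pushout--product axiom.

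It remains to verify the unit axiom, which is the one technically delicate point. The monoidal unit of $(\SPre,\times)$ is $*_{\SPre}=L(*_{\sPre})$, and since $L$ is left Quillen it is cofibrant whenever $*_{\sPre}$ is cofibrant in $\sPre$ — as is the case for the projective, flasque and injective model structures and their $\AA^1$-localizations — in which case the unit axiom is automatic. In general one takes a cofibrant replacement $q:Q*_{\sPre}\weq *_{\sPre}$; by Lemma \ref{rlsing} we have $R(L(q))=\Sing(q)$, which is an $\AA^1$-local weak equivalence by \cite[Corollary 3.8]{mv99}, so $L(q)$ is a weak equivalence ($R$ reflecting weak equivalences by definition of the $R$-lifted structure), whence $L(Q*_{\sPre})\to *_{\SPre}$ is a cofibrant replacement of the unit; the pushout--product axiom just proved makes $(-)\times X$ send acyclic cofibrations to acyclic cofibrations for cofibrant $X$, hence preserve weak equivalences between cofibrant objects by Ken Brown's lemma, from which the unit axiom follows — a separate, more careful argument being needed only in the degenerate case where $*_{\SPre}$ fails to be cofibrant, which does not occur for the model structures of interest. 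So the proof hinges on the strong monoidality of $L$; once that is available, transferring the two axioms along a strong monoidal left Quillen functor is routine.
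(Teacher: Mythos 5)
Your proof is correct and follows essentially the same route as the paper's: establish that $L$ is strong monoidal by reducing to representables via cocontinuity, then deduce the pushout--product axiom from $L(i)\,\square\, L(j)\cong L(i\,\square\, j)$ together with the fact that $L$ is left Quillen. You are in fact somewhat more careful than the paper, which states the claim for arbitrary cofibrations without making the reduction to the generating sets $LI$ and $LJ$ explicit and does not address the unit axiom at all; both of your additions are sound.
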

\begin{proof}
General results on enriched category theory imply that $\SPre$ is cartesian closed \cite{day}. Let $i:A\to B$ and $j:C\to D$ be cofibrations. One has to show that the \emph{pushout product}
\[\textstyle
i\hspace{1pt}{\mbox{\Tiny$\square$}}\hspace{1pt} j: (B\times C) \coprod_{(A\times C)}(A\times D) \to B\times D
\]
is a cofibration and an acyclic cofibration if $i$ or $j$ is a weak equivalence. This follows from the property of $L$ being a left Quillen functor and from the relation $L(i\hspace{1pt}{\mbox{\Tiny$\square$}}\hspace{1pt} j)\cong L(i)\hspace{1pt}{\mbox{\Tiny$\square$}}\hspace{1pt} L(j)$ holding as the functor $L$ is strong monoidal, which is the case since
$$
\begin{array}{rcl}
L(X\times Y)	&=& L(\colim(\hom(-,U)\times\Delta^n)\times \colim(\hom(-,V)\times\Delta^m))\\
		&=& L(\colim(\hom(-,U\times V)\times\Delta^n\times\Delta^m))\\
		&=& \colim(\sSet(-,U\times V)\times\Delta^n\times\Delta^m)\\
		&=& \colim(\sSet(-,U)\times\Delta^n)\times \colim(\sSet(-,V)\times\Delta^m)\\
		&=& L(X)\times L(Y).	        
\end{array}
$$ 
\end{proof}

\begin{lemma}
Consider the adjunction $L:\sPre\adjoint \SPre:R$ and let $\sPre$ be equipped with a simplicial model structure. If the category of enriched simplicial presheaves is endowed with the $R$-lifted model structure, then it is a simplicial model category.
\end{lemma}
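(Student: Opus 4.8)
The plan is to verify the pushout--product axiom (Quillen's SM7) for the evident simplicial enrichment of $\SPre$, reducing every instance along $L$ to the hypothesis that $\sPre$ is a simplicial model category.

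First I would record that $\SPre$, being a category of $\sSet$-enriched functors into $\sSet$, is tensored and cotensored over $\sSet$, both structures computed objectwise by $(X\otimes K)(U)=XU\times K$ and $(X^K)(U)=\sset_{\sset}(K,XU)$ (cf.\ \cite{borc942} together with Lemma \ref{bicomplete}); this is the simplicial category structure we put on $\SPre$, and its underlying ordinary category is again $\SPre$ by the same enriched--category bookkeeping as in Lemma \ref{bicomplete}. As $\SPre$ is already a model category by Theorem \ref{liftingtheorem}, only SM7 is at stake.

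The crucial preliminary step is that $L$ is a \emph{simplicial} functor, i.e.\ that there is a natural isomorphism $L(X\otimes K)\cong L(X)\otimes K$ for $X\in\sPre$, $K\in\sSet$. Writing $c_K$ for the constant enriched presheaf with value $K$, one has $X\otimes K\cong X\times c_K$ in $\sPre$ and $L(X)\otimes K\cong L(X)\times c_K$ in $\SPre$, so by the strong monoidality of $L$ established in Lemma \ref{verymonoidal} it suffices to show $L(c_K)\cong c_K$. For $K=\Delta^n$ this is immediate from the construction of $L$: one has $c_{\Delta^n}\cong\hom(-,\Spec k)\times\Delta^n$, and $L$ sends this to $\sset_{\smk}(-,\Spec k)\times\Delta^n$, while $\sset_{\smk}(-,\Spec k)$ is the terminal object of $\SPre$. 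The general case follows because $K\mapsto c_K$ (into $\sPre$ or $\SPre$) preserves colimits --- colimits being objectwise by Lemma \ref{bicomplete} --- and $L$ preserves colimits. Equivalently, $(L,R)$ is a simplicial Quillen adjunction, so $R$ too is simplicial and $\sset_{\SPre}(L(X),Y)\cong\sset_{\sPre}(X,R(Y))$ naturally as simplicial sets.

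Now I would verify SM7: for a cofibration $i:A\to B$ in $\SPre$ and a cofibration $j:C\to D$ in $\sSet$, the pushout--product
\[\textstyle
i\,\square\, j:(B\otimes C)\coprod_{(A\otimes C)}(A\otimes D)\longrightarrow B\otimes D
\]
should be a cofibration of $\SPre$, acyclic if $i$ or $j$ is a weak equivalence. Since $-\otimes K$ and $X\otimes-$ preserve colimits, the pushout--product is compatible with pushouts, transfinite compositions and retracts in each variable; hence the class of cofibrations $i$ for which the conclusion holds is saturated, and by the standard reduction to generators it suffices to treat $i$ running through the generating cofibrations $LI$ and generating acyclic cofibrations $LJ$ of the $R$-lifted structure (these generate by Lemma \ref{liftinglemma} and Theorem \ref{liftingtheorem}) and, symmetrically, $j$ running through $\partial\Delta^n\cofib\Delta^n$ and $\Lambda^n_r\cofib\Delta^n$. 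For $i'\in I$ (resp.\ $i'\in J$) the previous step together with the colimit--preservation of $L$ gives $L(i')\,\square\, j\cong L(i'\,\square\, j)$; as $\sPre$ is a simplicial model category, $i'\,\square\, j$ is a cofibration (resp.\ an acyclic cofibration) of $\sPre$, and $L$ is left Quillen by Theorem \ref{liftingtheorem}, so $L(i'\,\square\, j)$ is a cofibration (resp.\ an acyclic cofibration) of $\SPre$. The remaining acyclic case, $i$ an arbitrary cofibration and $j$ an acyclic cofibration, is handled identically after reducing $j$ to horn inclusions. This proves SM7, so $\SPre$ with the $R$-lifted model structure is a simplicial model category.

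The step I expect to require genuine care is the identification of $L$ as a simplicial functor --- i.e.\ that it commutes with the simplicial tensor; once this is established, the rest is a formal consequence of $\sPre$ being simplicial and $L$ being left Quillen. A secondary bookkeeping point is to check that the chosen $\sSet$-enrichment of $\SPre$ really has $\SPre$ as its underlying category, which is again part of the content of Lemma \ref{bicomplete}.
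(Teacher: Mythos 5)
Your proof is correct, and it reaches SM7 by a route that is organized differently from the paper's. The paper endows $\SPre$ with the same objectwise tensor $X\otimes A=X\times c_A$ (with $c_A$ the constant object) and cotensor, and then simply quotes Lemma \ref{verymonoidal}: tensoring with a simplicial set is a special case of the cartesian product with a constant object, so SM7 (in its pushout--product formulation, via the cited equivalence in Goerss--Jardine) is an instance of the monoidal pushout--product axiom already established for $(\SPre,\times)$. You instead re-run the transport-along-$L$ argument directly for the simplicial tensor: reduce to the generators $LI$, $LJ$ in one variable and to $\partial\Delta^n\subset\Delta^n$, $\Lambda^n_r\subset\Delta^n$ in the other, identify $L(i')\,\square\, j\cong L(i'\,\square\, j)$ using $L(c_K)\cong c_K$ together with the strong monoidality of $L$, and conclude from $\sPre$ being simplicial and $L$ being left Quillen. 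The underlying mechanism is the same (both rest on Lemma \ref{verymonoidal}), but your version makes explicit two points the paper leaves implicit: first, that $L$ is a simplicial functor, i.e.\ $L(c_K)\cong c_K$ --- which is exactly what one needs (in the form ``the constant functor $\sSet\to\SPre$ sends (acyclic) cofibrations to (acyclic) cofibrations'') to specialize the monoidal axiom on $\SPre$ to the simplicial one --- and second, the saturation/reduction-to-generators step. Note also that your argument genuinely uses the hypothesis that $\sPre$ is a simplicial model category, whereas the paper's three-line proof visibly invokes only the monoidal hypothesis of Lemma \ref{verymonoidal}; in the cartesian setting these amount to the same thing, but your bookkeeping is the more complete of the two.
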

\begin{proof}
The category $\SPre$ is naturally enriched over the category of simplicial sets by $\sSet(X,Y)=\hom_{\SPre}(X\times\Delta^\upperminus,Y)$. It is tensored with $X\otimes A=X(-)\times A$ and cotensored with $X^A=\hom_{\sSet}(A\times\Delta^\upperminus,X(-))$. By Lemma \ref{verymonoidal} a statement equivalent to the (SM7) axiom holds \cite[II.3.11]{goerss}.
\end{proof}

\begin{lemma}\label{homotopyinvariant}
Every enriched simplicial presheaf $X$ is homotopy invariant, that is the map
$$
	X(U)\to X(U\times\AA^1)
$$
induced by the projection is a weak equivalence of $\sSet$ for all objects $U$ of $\smk$.
\end{lemma}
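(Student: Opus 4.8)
The plan is to reduce the claim to a property of the singular functor $\Sing$ that is already available. First I would observe that every enriched simplicial presheaf $X$ factors through the simplicial structure on $\smk$, which by construction is induced by the algebraic cosimplicial object $\ADelta^\upperminus$. Concretely, the simplicial set $X(U)$ carries an action of $\sset_{\smk}(U,U)$, and more to the point, applying $X$ to the zig-zag
\[
\affin^1 \xrightarrow{i_0} \affin^1\times\affin^1 \xrightarrow{H} \affin^1, \qquad
\affin^1 \xrightarrow{i_1} \affin^1\times\affin^1 \xrightarrow{H} \affin^1
\]
from the introduction shows that the two maps $X(\affin^1)\to X(\affin^1)$ induced by $i_0$ and $i_1$ (precomposed with $H$) are connected by a $1$-simplex in $\sset_{\sset}$, i.e.\ are naively homotopic. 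This is the enriched-category manifestation of the $\affin^1$-homotopy contracting the affine line, and it is the conceptual reason homotopy invariance is automatic here.

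Rather than chase this by hand, the cleaner route is representability. For $U$ in $\smk$, the enriched presheaf $\sset_{\smk}(-,U)$ represented by $U$ satisfies $\sset_{\smk}(-,U) = L(U)$ where on the right $U$ denotes the simplicial presheaf represented by $U$ (this identification is exactly what was used in the proof of Corollary \ref{a1contr}, via Lemma \ref{rlsing}). Hence for any enriched simplicial presheaf $X$ and any $U$ we have, by adjunction \eqref{rladj},
\[
X(U) \;=\; \hom_{\SPre}(\sset_{\smk}(-,U),X)\text{-type data} \;=\; \hom_{\sPre}(U, R X),
\]
so that evaluation of $X$ is computed by the simplicial-presheaf-level mapping object out of a representable into $RX$. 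The projection $U\times\affin^1\to U$ then induces the map $X(U)\to X(U\times\affin^1)$, which under this identification is the map $(RX)(U)\to (RX)(U\times\affin^1)$ — and this is precisely the question of whether the ordinary simplicial presheaf $RX$ is homotopy invariant in the usual sense.

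So the final step is to show $RX$ is (objectwise) homotopy invariant. Here I would use that $R=\Sing$ in the relevant sense, or more directly that $RX$ lies in the image of $\Sing$: for any simplicial presheaf $F$, the presheaf $\Sing(F)$ is $\affin^1$-homotopy invariant because $\Sing(F)(U\times\affin^1)$ is built from $\Hom_{\Pre}(U\times\affin^1\times\ADelta^\bullet, F_\bullet)$ and the standard simplicial-homotopy-equivalence argument of \cite[Corollary 3.5 and Lemma 3.2]{mv99} produces a deformation retraction onto $\Sing(F)(U)$; concretely, $\ADelta^1 \times \ADelta^\bullet$ admits a simplicial contraction compatible with the two endpoint inclusions, and this yields a simplicial homotopy equivalence $\Sing(F)(U)\xrightarrow{\sim}\Sing(F)(U\times\affin^1)$ at the level of simplicial sets. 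Since $RX \cong \Sing(RX)$? — no; the correct statement is that $RX$ need not literally be $\Sing$ of something, so instead I would argue directly: for any enriched $X$, the simplicial homotopy $H$ above, applied through the simplicial functoriality of $X$, produces an explicit simplicial homotopy between the identity of $X(U\times\affin^1)$ and the composite $X(U\times\affin^1)\to X(U)\to X(U\times\affin^1)$, while the composite $X(U)\to X(U\times\affin^1)\to X(U)$ is the identity on the nose (it is induced by $i_0$ followed by projection, which compose to $\id_U$). Hence $X(U)\to X(U\times\affin^1)$ is a simplicial homotopy equivalence, in particular a weak equivalence of $\sset$.

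The main obstacle is making the last paragraph precise: one must verify that the naive ($1$-simplex) homotopy $H$ between $i_0$ and $i_1$ in $\sset_{\smk}$ is promoted, under the simplicial functor $X$, to an actual simplicial homotopy $X(U\times\affin^1)\times\Delta^1 \to X(U\times\affin^1)$ of simplicial sets, compatibly with the two endpoints and with base $U$. This is a diagram chase with the structure maps $\sset_{\smk}(V,W)\to\sset_{\sset}(X(W),X(V))$ and the standard adjunction $\sset_{\sset}(A,B)_1 = \Hom_{\sset}(A\times\Delta^1,B)$; it is routine but is where the content lies. Once that is in hand, weak equivalence follows since simplicial homotopy equivalences are weak equivalences of simplicial sets.
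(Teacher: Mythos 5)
Your final paragraph is exactly the paper's argument: a simplicial functor sends the $1$-simplex of $\sset_{\smk}(U\times\AA^1,\,U\times\AA^1)$ witnessing the naive homotopy equivalence $U\times\AA^1\simeq U$ (coming from the multiplication homotopy contracting $\AA^1$ to the point) to a $1$-simplex of $\sset_{\sset}$, i.e.\ a simplicial homotopy, so $X(U)\to X(U\times\AA^1)$ is a simplicial homotopy equivalence and hence a weak equivalence. The detour through $R$ and $\Sing$ in your middle paragraphs is a dead end, as you yourself recognize, but the direct argument you settle on is correct and is essentially the one the paper gives.
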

\begin{proof}
An enriched simplicial presheaf $X$ maps a morphism $f:U\to V$ of $\smk$ to a $0$-simplex of the simplicial set $\sSet(XV,XU)$ and it maps a naive homotopy $H:U\times\ADelta^1\to V$ of $\smk$ to a $1$-simplex of $\sSet(XV,XU)$, which is a homotopy equivalence of the simplicial sets $XV$ and $XU$ with respect to the cylinder object $\Delta^1$. Therefore $X$ takes naive homotopy equivalences in $\smk$ to weak equivalences in $\sSet$. The assertion is obtained from the fact that the affine line $\AA^1$ is naive homotopy equivalent to the point $\Spec(k)$ in $\smk$ where a homotopy equivalence is given by the map $k[X]\to k[X,Y]$, $X\mapsto XY$ of $k$-algebras.
\end{proof}

\begin{corollary}
Let $\SPre$ be equipped with a simplicial model structure in which every object of $\smk$ is cofibrant. Then the class
$$
C=\{U\times\AA^1\xrightarrow{pr}U\mid U\in\smk\}
$$
consists of weak equivalences. 
\end{corollary}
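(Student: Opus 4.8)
The plan is to push the objectwise homotopy invariance of Lemma \ref{homotopyinvariant} through the simplicial structure by means of the enriched Yoneda lemma. Write $U$ also for the enriched simplicial presheaf $\sset_{\smk}(-,U)$ it represents, so that the elements of $C$ are the images of the projections $pr\colon U\times\AA^1\to U$ under the enriched Yoneda embedding $\smk\to\SPre$; by hypothesis $U$ and $U\times\AA^1$ are cofibrant objects of $\SPre$. I would then use the standard criterion for simplicial model categories that a morphism $f\colon A\to B$ between cofibrant objects is a weak equivalence if and only if the induced map $\sSet(B,Z)\to\sSet(A,Z)$ of simplicial mapping spaces is a weak equivalence for every fibrant object $Z$ (see e.g.\ \cite{goerss}, \cite{hirsch}). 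Hence it suffices to show that $\sSet(U,Z)\to\sSet(U\times\AA^1,Z)$ is a weak equivalence of simplicial sets for each fibrant $Z$.

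The next step is to identify these mapping spaces. The simplicial enrichment underlying the simplicial model structure, $\sSet(X,Y)_n=\hom_{\SPre}(X\times\Delta^n,Y)$, coincides with the canonical $\sSet$-enrichment of the functor category $\SPre$ by enriched natural transformations, as a consequence of the tensoring $X\otimes A=X(-)\times A$ together with the fact that $\Delta^n$ is a constant presheaf. The enriched Yoneda lemma then provides a natural isomorphism $\sSet(\sset_{\smk}(-,U),Z)\cong Z(U)$, under which the map $\sSet(U,Z)\to\sSet(U\times\AA^1,Z)$ induced by $pr$ becomes precisely the map $Z(U)\to Z(U\times\AA^1)$ induced by the projection. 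By Lemma \ref{homotopyinvariant} this map is a weak equivalence of simplicial sets for every enriched simplicial presheaf $Z$, in particular for every fibrant one, and the criterion above then gives that every morphism of $C$ is a weak equivalence. (One could equally well note that $pr$ is already a naive homotopy equivalence in $\smk$, as computed in the proof of Lemma \ref{homotopyinvariant}, and that a $\sSet$-functor such as the Yoneda embedding carries it to a simplicial homotopy equivalence between cofibrant objects, hence to a weak equivalence.)

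The only point requiring care is the identification of enrichments in the middle step: one must check that the enrichment used for the simplicial model structure on $\SPre$ really is the enriched-functor-category enrichment, so that the enriched Yoneda lemma applies verbatim and identifies the map induced by $pr$ with $Z(pr)$. Everything else is formal --- which is exactly the content of the corollary, namely that homotopy invariance is built into enriched simplicial presheaves and no $\AA^1$-localization is required.
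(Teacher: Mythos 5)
Your argument is correct and is essentially the paper's own proof: both identify $\sSet(U,Z)\to\sSet(U\times\AA^1,Z)$ with $Z(U)\to Z(U\times\AA^1)$ via the enriched Yoneda lemma, invoke Lemma \ref{homotopyinvariant}, and conclude by the standard detection of weak equivalences between cofibrant objects via mapping spaces into fibrant objects (\cite[Corollary 9.7.5]{hirsch}). Your added care about matching the two enrichments, and the parenthetical alternative via naive homotopy equivalences, are fine but do not change the route.
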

\begin{proof}
Lemma \ref{homotopyinvariant} provides that $\sSet(U,X)\to\sSet(U\times\AA^1,X)$ is a weak equivalence of simplicial sets for every enriched simplicial presheaf $X$ by an enriched version of the Yoneda Lemma. Weak equivalences in a simplicial model category are detected by the property of the above morphism being a weak equivalence of simplicial sets for all fibrant objects $X$ \cite[Corollary 9.7.5]{hirsch}.
\end{proof}

\begin{corollary}\label{fibrant}
Consider the adjunction $L:\sPre\adjoint \SPre:R$ and the class
$$
C=\{U\times\AA^1\xrightarrow{pr}U\mid U\in\smk\}
$$
of morphisms of simplicial presheaves. Let $\sPre$ be equipped with a Bousfield localized model structure $L_C(\sPre)$ in which every object of $\smk$ is cofibrant. Suppose that the $R$-lifted model structure on $\SPre$ exists. Then an object $X$ of $\SPre$ is fibrant if and only if the object $R(X)$ is fibrant in $\sPre$ before localizing. 
\end{corollary}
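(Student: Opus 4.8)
The plan is to strip the $R$-lifted model structure down to a statement about $R(X)$ alone, then use the standard description of fibrant objects in a left Bousfield localization, and finally observe that the remaining ``locality'' condition is automatic for objects of the form $R(X)$ because every enriched simplicial presheaf is homotopy invariant (Lemma~\ref{homotopyinvariant}).

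First I would unwind the lifting. As a right adjoint, $R$ preserves the terminal object, so $R$ applied to $X\to\ast$ is the map $R(X)\to\ast$. By the definition of the $R$-lifted model structure on $\SPre$, which here is lifted from $L_C(\sPre)$, the former is a fibration if and only if the latter is; hence $X$ is fibrant in $\SPre$ if and only if $R(X)$ is fibrant in $L_C(\sPre)$. So the corollary is equivalent to the assertion that $R(X)$ is fibrant in $L_C(\sPre)$ if and only if $R(X)$ is fibrant in $\sPre$ before localizing.

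Next I would invoke the general theory of left Bousfield localization: the fibrant objects of $L_C(\sPre)$ are exactly the $C$-local objects, i.e.~those $Z$ that are fibrant in $\sPre$ before localizing and for which $\Map(B,Z)\to\Map(A,Z)$ is a weak equivalence of simplicial sets for every $(A\to B)$ in $C$. One implication is then immediate, since a $C$-local object is in particular fibrant before localizing. For the converse it remains to check that, as soon as $R(X)$ is fibrant before localizing, it is automatically $C$-local. Fix $U\in\smk$; then $U\times\AA^1$ again lies in $\smk$, so by hypothesis $U$ and $U\times\AA^1$ are cofibrant, and, the ambient model structures being simplicial, the homotopy function complexes $\Map(U,R(X))$ and $\Map(U\times\AA^1,R(X))$ are computed by the simplicial mapping spaces $\sSet_{\sPre}(U,R(X))$ and $\sSet_{\sPre}(U\times\AA^1,R(X))$. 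The adjunction $(L,R)$ is simplicial: $L$ is strong monoidal by the computation in the proof of Lemma~\ref{verymonoidal}, it preserves the simplicial tensoring, and it carries the presheaf represented by a scheme $V$ to the enriched representable $\sSet_{\smk}(-,V)$. Hence $\sSet_{\sPre}(U,R(X))\cong\sSet_{\SPre}(LU,X)\cong X(U)$ by the enriched Yoneda lemma, and likewise $\sSet_{\sPre}(U\times\AA^1,R(X))\cong X(U\times\AA^1)$. Under these natural identifications the map induced by the projection $U\times\AA^1\to U$ becomes the projection-induced map $X(U)\to X(U\times\AA^1)$, which is a weak equivalence by Lemma~\ref{homotopyinvariant}. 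Hence $R(X)$ is $C$-local, hence fibrant in $L_C(\sPre)$, which finishes the argument.

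The step I expect to be the main obstacle is making the chain $\Map(U,R(X))\simeq\sSet_{\sPre}(U,R(X))\cong\sSet_{\SPre}(LU,X)\cong X(U)$ precise and natural in $U$: this rests on $(L,R)$ being an $\sSet$-enriched adjunction --- which itself uses that $L$ is strong monoidal and sends a representable presheaf to the enriched representable --- together with the enriched Yoneda lemma in $\SPre$, plus the routine but necessary point that under the stated cofibrancy and fibrancy hypotheses the simplicial hom-set already models the derived mapping space. Everything else is bookkeeping with the definition of the $R$-lifted structure and with the standard characterization of fibrant objects after a left Bousfield localization.
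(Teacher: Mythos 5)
Your argument is correct and is exactly the one the paper intends (the paper leaves this corollary without an explicit proof, but the preceding Lemma~\ref{homotopyinvariant} and the corollary on the class $C$ are set up precisely to feed the same chain: fibrant in $L_C(\sPre)$ $=$ fibrant before localizing $+$ $C$-local, with $C$-locality automatic via the enriched Yoneda identification $\Map(U,R(X))\cong X(U)$ and homotopy invariance). No essential difference in approach.
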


\begin{lemma}
Consider the adjunction $L:\sPre\adjoint \SPre:R$ and let $\sPre$ be equipped with a left proper cofibrantly generated model structure with $\AA^1$-local weak equivalences as weak equivalences and with the property that every cofibration is in particular a monomorphism. If the category of enriched simplicial presheaves is endowed with the $R$-lifted model structure, then it is a left proper model category.
\end{lemma}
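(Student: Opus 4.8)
The plan is to transfer left properness from $\sPre$ to $\SPre$ along the right adjoint $R$, using two special features of this adjunction: that $R$ commutes with colimits (as already exploited in the proof of Theorem \ref{liftingtheorem}), and that $R$ carries cofibrations of $\SPre$ to monomorphisms of $\sPre$. Concretely, I would start from a pushout square
$$
\xymatrix{
A\ar[r]^f\ar[d]_i	&	C\ar[d]^j	\\
B\ar[r]_g		&	D
}
$$
in $\SPre$ with $i$ a cofibration and $f$ a weak equivalence, apply $R$, and use that $R$ preserves pushouts to obtain a pushout square in $\sPre$. In that square $R(f)$ is a weak equivalence by the very definition of the $R$-lifted model structure, so it remains to see that $R(g)$ is a weak equivalence of $\sPre$; this will force $g$ to be a weak equivalence of $\SPre$, again by definition of the lifted structure.

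The central step is to check that $R(i)$ is a monomorphism. Since $\SPre$ carries the $(LI,LJ)$-cofibrantly generated lifted structure, $i$ is a retract of a relative $LI$-cell complex, where $I$ is a set of generating cofibrations of $\sPre$. As $R$ preserves pushouts and transfinite compositions, it sends this relative $LI$-cell complex to a relative $RL(I)$-cell complex, and $RL(I)=\Sing(I)$ by Lemma \ref{rlsing}. Every element of $I$ is a monomorphism by hypothesis, and the singular functor preserves monomorphisms by \cite[Corollary 3.8]{mv99}; since monomorphisms of $\sPre$ are stable under pushout and transfinite composition (they are detected objectwise in $\sSet$, which is a topos), the resulting relative cell complex is a monomorphism, hence so is its retract $R(i)$.

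To finish, I would bring in the $\AA^1$-local injective model structure on $\sPre$ already used in the proof of Theorem \ref{liftingtheorem}: its cofibrations are exactly the monomorphisms, its weak equivalences are the $\AA^1$-local weak equivalences — the same class as in the given model structure on $\sPre$ by hypothesis — and it is left proper, being a left Bousfield localization of the left proper injective model structure (cf.~\cite{hirsch}). Applying left properness of this structure to the pushout square obtained by $R$ shows that $R(g)$, the pushout of the $\AA^1$-local weak equivalence $R(f)$ along the monomorphism $R(i)$, is again an $\AA^1$-local weak equivalence, i.e.\ a weak equivalence of $\sPre$; therefore $g$ is a weak equivalence and $\SPre$ is left proper. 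I expect the monomorphism step to be the only real obstacle: $R$ is merely a right adjoint and in general does not preserve cofibrations, so left properness cannot be transferred by formal nonsense, and the argument genuinely rests on the non-formal facts that $R$ preserves colimits and that $\Sing$ respects monomorphisms.
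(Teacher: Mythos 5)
Your argument is correct, but it takes a genuinely different route from the paper's. The paper first reduces to the $R$-lifted $\AA^1$-local \emph{injective} structure, invokes \cite[Proposition B.1]{HTT} to know that the injective model structure on $\SPre$ exists, is left proper and is the $R$-lift of the injective structure on $\sPre$, and then identifies the lifted local injective structure with a left Bousfield localization of it (by matching generating cofibrations and, via Corollary \ref{fibrant}, fibrant objects), so that left properness follows from the general stability of left properness under left Bousfield localization. You instead verify the defining condition directly: you push the relevant pushout square down to $\sPre$ along the colimit-preserving functor $R$, observe that $R$ sends any cofibration of the lifted structure --- a retract of a relative $LI$-cell complex --- to a retract of a relative $RL(I)=\Sing(I)$-cell complex, hence to a monomorphism (since $\Sing$ preserves monomorphisms and monomorphisms of a presheaf topos are closed under pushout, transfinite composition and retract), and then appeal to left properness of the $\AA^1$-local injective structure on $\sPre$, which has exactly the monomorphisms as cofibrations and the same weak equivalences as the given structure. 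All the inputs you use ($R$ commutes with colimits, the lifted structure is $(LI,LJ)$-cofibrantly generated, $\Sing$ respects monomorphisms, localizations of the left proper injective structure are left proper) are already established or cited elsewhere in the paper, so the argument goes through. Your version is more elementary --- it avoids \cite[Proposition B.1]{HTT} entirely and also sidesteps the paper's unexplained opening reduction ``it is sufficient to treat the lifted local injective structure'' --- while the paper's version yields the additional structural fact that the lifted local injective model structure is itself a left Bousfield localization of a lifted injective structure, which is of some independent interest.
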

\begin{proof}
It is sufficient to show that the $R$-lifted $\affin^{1}$-local injective model structure is left proper. The injective model structure on $\SPre$ is left proper and it is the $R$-lifted model of the injective structure on $\sPre$ \cite[Proposition B.1]{HTT}. Let $B$ be a class of cofibrations in $\sPre$, such that the localization at $B$ is the local injective model structure. Then $(L,R)$ is a Quillen adjunction between the local injective model on $\sPre$ and the localization $M$ of the injective model structure on $\SPre$ at $L(B)$ \cite[Theorem 3.3.20]{hirsch}. We show that $M$ coincides with the $R$-lifted $\affin^1$-local injective model structure on $\SPre$. Let the injective model structure on $\sPre$ be $(I,J)$-cofibrantly generated, then the injective model structure on $\SPre$ is $(LI,LJ)$-cofibrantly generated and so is its left Bousfield localization $M$. By the same arguments, the $R$-lifted $\affin^1$-local injective model structure on $\SPre$ is also $(LI,LJ)$-cofibrantly generated. Hence both model structures have the same cofibrations. Moreover, their fibrant objects coincide by Corollary \ref{fibrant} and the fact that an object $X$ is fibrant in the Bousfield localization $M$ if and only if $\sSet(-,X)$ maps $B$ to weak equivalences. Therefore the model structures are the same since a model structure is determined by its cofibrations and its fibrant objects.
\end{proof}

\begin{remark}
The previous statements might suggest that it is possible to get a model for the motivic homotopy category by lifting a local model structure to the category of enriched simplicial presheaves. In view of Lemma \ref{liftinglemma} one observes that a $(I,J)$-cofibrantly generated model structure lifts via $(L,R)$ to the category of enriched simplicial presheaves if $\Sing(j)$ is a local weak equivalence for every generating acyclic cofibration $j$ in $J$, but the singular functor does not preserve local weak equivalences in general.
\end{remark}

\bibliographystyle{amsalpha}
\bibliography{ref.bib}

\end{document}